\newtheorem{theorem}{Theorem}
\newtheorem{lemma}[theorem]{Lemma}
\newcommand{\reals}{\mathbb{R}}
\newcommand{\complex}{\mathbb{C}}
\newcommand{\kp}[2]{\ker (\varphi-\lambda_{#1} I)^{#2}}
\begin{document}
\title[Finite dimensional invariant subspaces]{Characterization of finite dimensional  subspaces of complex
  functions that are invariant under linear differential operators}
\author{Pep Mulet}
\thanks{Dpt. matem\`atica aplicada, Univ. Val\`encia, Av. Vicent
  Andr\'es Estell\'es,  Burjassot, Spain, {\tt mulet@uv.es}.
This work has been financially supported by Spanish MINECO
  projects MTM2011-22741 and MTM2014-54388-P}
\date{\today}

\maketitle
\begin{abstract}
The method to  solve inhomogeneous linear differential equations that
is usually taught at school relies  on the fact that the right
hand side function is the product of a polynomial and an exponential
and that  the 
linear spaces of those functions are invariant  under differential
operators (finite or ordinary). 

This short note uses Jordan's canonical decomposition 
to prove that the linear spaces spanned by products of  polynomial and
exponentials are the 
only  linear complex spaces that are invariant under
differential operators, therefore  non-homogeneous linear finite difference
or ordinary differential  equations  can only be generically solved
when the right hand side belongs to those spaces.
\end{abstract}

\textbf{Keywords:} solution of inhomogeneous linear ordinary differential
equations

\textbf{AMS classification:}   15A21, 34A05.

\section{Introduction}
We characterize the finite dimensional subspaces of the space
of complex sequences which are invariant under every linear finite
differences operator as direct sums of spaces of arithmetic-geometric
sequences. We also characterize finite
dimensional subspaces of complex functions  which are
invariant under every linear differential operator as spaces of
polynomial-exponential spaces. This  explains why inhomogeneous
linear differential 
equations can only be generally solved  when the right hand sides are sums of
exponentials times polynomials.

The following result is equivalent to  Jordan's decomposition of a
matrix. Its proof is included for the sake of completeness.

\begin{theorem}\label{prop:1}
Let $p(x)=(x-\lambda_1)^{l_1}\dots(x-\lambda_s)^{l_s}$ be the minimal
polynomial of an endomorphism $\varphi$ of a finite dimensional
$\complex$-vector space $V$ (i.e.,
$p$ is the monic polynomial of minimal degree satisfying
$p(\varphi)=0$), $\lambda_i\in \complex$. Then 
\[
V=\oplus_{i=1}^{s}\kp {i}{l_i}.
\]
\end{theorem}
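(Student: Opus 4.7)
The plan is to reduce the statement to Bezout's identity in the principal ideal domain $\complex[x]$, applied to a family of polynomials that are coprime as a set. For each $i$, set $q_i(x) = p(x)/(x-\lambda_i)^{l_i} = \prod_{j\neq i}(x-\lambda_j)^{l_j}$. Since any putative common factor of $q_1,\dots,q_s$ would have to divide every $(x-\lambda_j)^{l_j}$ except one, but these are pairwise coprime, we have $\gcd(q_1,\dots,q_s)=1$. Bezout then yields polynomials $a_1,\dots,a_s\in\complex[x]$ with $\sum_{i=1}^s a_i(x)q_i(x)=1$.

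The next step is to evaluate this identity at $\varphi$ to produce the decomposition. The relation $\sum_i a_i(\varphi)q_i(\varphi)=I_V$ writes any $v\in V$ as $v=\sum_i v_i$ with $v_i:=a_i(\varphi)q_i(\varphi)v$. I would then observe that
\[
(\varphi-\lambda_i I)^{l_i}\,a_i(\varphi)q_i(\varphi) = a_i(\varphi)\,p(\varphi) = 0,
\]
so $v_i\in\kp{i}{l_i}$. This establishes $V=\sum_{i=1}^s\kp{i}{l_i}$.

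Directness I would argue by a second appeal to coprimality. Suppose $\sum_{i=1}^s v_i=0$ with $v_i\in\kp{i}{l_i}$, and fix $j$. For every $i\neq j$ the polynomial $q_j$ contains $(x-\lambda_i)^{l_i}$ as a factor, hence $q_j(\varphi)v_i=0$; applying $q_j(\varphi)$ to the relation thus gives $q_j(\varphi)v_j=0$. Because $\gcd(q_j(x),(x-\lambda_j)^{l_j})=1$, a further Bezout identity $r(x)q_j(x)+s(x)(x-\lambda_j)^{l_j}=1$ yields
\[
v_j = r(\varphi)q_j(\varphi)v_j + s(\varphi)(\varphi-\lambda_j I)^{l_j}v_j = 0,
\]
as both terms vanish. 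Hence the sum is direct.

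The only conceptually non-routine ingredient is the use of Bezout in $\complex[x]$; everything else is bookkeeping with commuting operators. I do not expect a real obstacle. It is worth noting that minimality of $p$ plays no role in the proof itself: the same argument decomposes $V$ using any annihilating polynomial with the given distinct roots. Minimality only guarantees that the exponents $l_i$ are the smallest ones for which the stated kernels fill $V$, and in particular that each summand $\kp{i}{l_i}$ is nonzero.
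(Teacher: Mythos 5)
Your proposal is correct and follows essentially the same route as the paper: Bezout's identity applied to the cofactors $q_i = p/(x-\lambda_i)^{l_i}$ (the paper's $p_i$) to obtain the sum decomposition, and a second Bezout identity for $\gcd(q_j,(x-\lambda_j)^{l_j})=1$ to obtain directness. Your closing remark that minimality of $p$ is not needed for the decomposition itself is also accurate.
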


\begin{proof}
Define polynomials $p_i(x)=p(x)/(x-\lambda_i)^{l_i}$, whose greatest common divisor is
$1$. Therefore, by Bezout's identity (see \cite{hungerford}), there
exist polynomials $r_i$ 
such that $1=\sum_{i=1}^{s} r_ip_i$. This implies that the identity
mapping in $V$, $I_{V}$ can be written as
$I_{V}=\sum_{i=1}^{s} r_i(\varphi)p_i(\varphi)$, so
we deduce that  any $v\in V$ can be written as
\begin{equation}\label{eq:78}
v=\sum_{i=1}^{s} r_i(\varphi)p_i(\varphi)(v).
\end{equation}
Since
\begin{align*}
(\varphi-\lambda_i I)^{l_i}r_i(\varphi)p_i(\varphi)(v)
=
r_i(\varphi)p(\varphi)(v)=0,
\end{align*}
it turns out that $r_i(\varphi)p_i(\varphi)(v)\in \kp
{i}{l_i}$. Since $v$ is arbitrary, then
\eqref{eq:78} implies that
\begin{equation}\label{eq:2}
V=\sum_{i=1}^{s}\kp {i}{l_i}.
\end{equation}

Assume now that $\sum_{i} v_i=0$, $v_i\in\kp{i}{l_i}$. For any
$i=1,\dots,s$, Bezout's identity gives polynomials $s_i, t_i$, such
that
\[
s_i(x)(x-\lambda_i)^{l_i}+t_i(x)p_i(x)=1,
\]
which gives 
\[
s_i(\varphi)(\varphi-\lambda_i
I)^{l_i}+t_i(\varphi)p_i(\varphi)=I_{V}.
\]
Since $v_i=-\sum_{j\neq i}v_j$, then, for any $i=1,\dots,s$ we get
that $p_i(\varphi)v_j=0$, for $j\neq i$, which implies:
\begin{align*}
  v_i=s_i(\varphi)(\varphi-\lambda_i I)^{l_i}(v_i)-
  \sum_{j\neq   i}t_i(\varphi)p_i(\varphi)(v_j)=0,
\end{align*}
which yields that the sum \eqref{eq:2} is direct.
\end{proof}

\section{Invariant subspaces of complex sequences under linear finite  differences operators}
We consider the shift operator $S$ on complex sequences
$y=(y_n)=(y_n)_{n\in\mathbb N}$ given by
$(Sy)_n=y_{n+1}$, where $n$ will denote the independent variable
(index) unless otherwise stated.  Finite difference operators on
complex sequences are polynomials in $S$.
We denote by $\Pi_m$ the set of
(complex) 
polynomials of degree  at most $m$ and the subspace of complex
sequences 
\[
G_{\lambda,m}=\{(\lambda^n p(n)) \colon p\in\Pi_{m-1}\}.
\]

\begin{lemma}\label{lemma:5}
  Given $\lambda\in \complex$ and $m>0$, 
  $\ker (S-\lambda I)^m=G_{\lambda,m}$.
\end{lemma}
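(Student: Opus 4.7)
The plan is to prove the inclusion $G_{\lambda,m}\subseteq\ker(S-\lambda I)^m$ by induction and then match dimensions, showing both spaces have dimension $m$.

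For the inclusion, I would take a typical element $y=(\lambda^n p(n))$ of $G_{\lambda,m}$ with $p\in\Pi_{m-1}$ and compute
\[
\bigl((S-\lambda I)y\bigr)_n = \lambda^{n+1}p(n+1) - \lambda\cdot\lambda^n p(n) = \lambda^n\cdot\lambda\bigl(p(n+1)-p(n)\bigr).
\]
Since the forward difference $p(n+1)-p(n)$ lowers the degree by one, the sequence $\lambda(p(n+1)-p(n))$ lies in $\Pi_{m-2}$, so $(S-\lambda I)$ sends $G_{\lambda,m}$ into $G_{\lambda,m-1}$. An induction on $m$, with base case $G_{\lambda,1}\subseteq\ker(S-\lambda I)$ which is immediate from the computation above with constant $p$, gives $G_{\lambda,m}\subseteq\ker(S-\lambda I)^m$.

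For the reverse inclusion I would argue by dimensions. The linear map $p\mapsto(\lambda^n p(n))$ from $\Pi_{m-1}$ into complex sequences is injective for $\lambda\neq 0$, because a polynomial with infinitely many roots must vanish; hence $\dim G_{\lambda,m}=\dim\Pi_{m-1}=m$. On the other hand, expanding
\[
(S-\lambda I)^m = \sum_{k=0}^{m}\binom{m}{k}(-\lambda)^{m-k}S^k,
\]
the equation $(S-\lambda I)^m y = 0$ is a monic linear recurrence of order $m$ in the unknown sequence, so $y$ is uniquely determined by its first $m$ entries $y_0,\dots,y_{m-1}$, which may be chosen freely; this yields $\dim\ker(S-\lambda I)^m=m$. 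The inclusion together with equal finite dimensions then forces equality.

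The main obstacle is essentially bookkeeping: checking cleanly that $p(n+1)-p(n)$ strictly drops the degree, and that the leading coefficient of the expanded recurrence is $1$ so that the kernel truly has dimension $m$. A minor wrinkle is the degenerate case $\lambda=0$, where the parametrization $p\mapsto(\lambda^n p(n))$ collapses under the usual convention; this case can be handled separately by observing that $\ker S^m$ is simply the $m$-dimensional space of sequences vanishing from index $m$ onward, so the result reduces to a direct inspection.
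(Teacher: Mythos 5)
Your proof is correct, and for the inclusion $G_{\lambda,m}\subseteq\ker(S-\lambda I)^m$ it is essentially the paper's argument: both rest on the observation that $S-\lambda I$ lowers the polynomial degree by one, which the paper packages as the explicit formula $(S-\lambda I)^k(n^r\lambda^n)=\sum_{j=0}^{r-k}\alpha_j^{k,r}(n^j\lambda^n)$ with $\alpha_{r-k}^{k,r}\neq 0$. For the reverse inclusion, however, you take a genuinely different route. The paper inducts on $m$: given $y\in\ker(S-\lambda I)^{m+1}$, it notes that $(S-\lambda I)^m y$ is a scalar multiple of $(\lambda^n)$, subtracts a suitable multiple of $(n^m\lambda^n)$ so as to land in $\ker(S-\lambda I)^m$, and applies the induction hypothesis. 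You instead count dimensions: $\dim G_{\lambda,m}=m$ by injectivity of $p\mapsto(\lambda^n p(n))$, and $\dim\ker(S-\lambda I)^m=m$ because the expanded operator is a monic recurrence of order $m$, so a contained subspace of the same finite dimension must be the whole kernel. Your argument is shorter and more self-contained; the paper's more laborious formula is not wasted, though, since the nonvanishing coefficients $\alpha_{r-k}^{k,r}$ are reused in the proof of Theorem \ref{theorem:3} to show $G_{\lambda_i,l_i}\subseteq V$, whereas your dimension count would not by itself supply that later step. One further point in your favor: you flag the degenerate case $\lambda=0$, which the paper silently ignores (its claim $\alpha_{r-k}^{k,r}\neq 0$ fails there); be aware, however, that under the usual convention $0^0=1$ the space $G_{0,m}$ is one-dimensional while $\ker S^m$ is $m$-dimensional, so the ``direct inspection'' you defer to actually reveals that the statement requires $\lambda\neq 0$ or a reinterpretation of $G_{0,m}$, rather than merely confirming it.
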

\begin{proof}
  By induction on $k$ it can be easily established that there exist
  $\alpha_{j}^{k, r}\in\mathbb C$ such that
  \begin{equation}\label{eq:23}
    (S-\lambda I)^k(n^r\lambda^n)=\sum_{j=0}^{r-k}\alpha_{j}^{k, r}
    (n^j\lambda^n),\quad \alpha_{r-k}^{k,r}\neq 0,
  \end{equation}
  for any $r\geq k$ and 
  $(S-\lambda I)^k(n^r\lambda^n)=0$ for $k>r$. This immediately gives
  that   $G_{\lambda,m} \subseteq \ker (S-\lambda I)^m$, for all $m$.
  We prove the other inclusion by induction on $m$, the case $m=1$ being
  trivial. So, assume that $\ker (S-\lambda I)^m=G_{\lambda,m}$ and
  aim to prove $\ker (S-\lambda I)^{m+1}=G_{\lambda,m+1}$. For this,
  consider $y\in\ker (S-\lambda I)^{m+1}$, so that $(S-\lambda
  I)^{m}y\in\ker (S-\lambda
  I)=G_{\lambda,1}=\{(\alpha\lambda^n) \colon \alpha\in\complex\}$ and,
  therefore, 
  \begin{equation}\label{eq:231}
    (S-\lambda   I)^{m}y=\alpha (\lambda^n),
  \end{equation}
  for some $\alpha\in\complex$. On the other hand,
  by \eqref{eq:23} 
  \begin{equation}\label{eq:232}
    (S-\lambda I)^m(n^m\lambda^n)=\alpha_{0}^{m,m}(\lambda^n),
  \end{equation}
  which, together with \eqref{eq:231}, gives:
  \[
  (S-\lambda I)^m(y-\frac{\alpha}{\alpha_{0}^{m,m}}(n^m\lambda^n))=0.
  \]
   The induction hypothesis thus yields
  $y-\frac{\alpha}{\alpha_{0}^{m,m}}(n^m\lambda^n)\in G_{\lambda,m}$, that
  is $y\in G_{\lambda,m+1}$ and the  proof is complete.

\end{proof}

\begin{theorem}\label{theorem:3}
  Let $V$ be a finite dimensional subspace of complex sequences. Then
  $V$ is invariant under every linear finite difference operator if and
  only if there exists $\lambda_i, l_i$, $i=1,\dots,s$ such that
  $V=\oplus_{i=1}^{s}  G_{\lambda_i,l_i}$.
\end{theorem}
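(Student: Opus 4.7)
\medskip

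\noindent\textbf{Proof plan.}
The $(\Leftarrow)$ direction is the easy one. Assuming $V=\oplus_{i=1}^{s}G_{\lambda_i,l_i}$, Lemma \ref{lemma:5} identifies each summand with $\ker(S-\lambda_i I)^{l_i}$, which is manifestly $S$-invariant (kernels of polynomials in $S$ are $S$-invariant). Since every linear finite difference operator is a polynomial in $S$, invariance under $S$ of each summand (and hence of $V$) gives invariance under every linear finite difference operator.

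For $(\Rightarrow)$, the key observation is that invariance of the finite dimensional $V$ under every polynomial in $S$ reduces to invariance under $S$ itself, so $\varphi:=S|_{V}$ is a well-defined endomorphism of $V$. The plan is now to run $\varphi$ through Theorem \ref{prop:1}: letting $p(x)=\prod_{i=1}^{s}(x-\lambda_i)^{l_i}$ be the minimal polynomial of $\varphi$, we get
\[
V=\bigoplus_{i=1}^{s}W_i,\qquad W_i:=\kp{i}{l_i},
\]
and the remaining task is to identify each $W_i$ with $G_{\lambda_i,l_i}$.

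One inclusion is immediate: any $y\in W_i$ satisfies $(S-\lambda_i I)^{l_i}y=(\varphi-\lambda_i I)^{l_i}y=0$, so $y\in\ker(S-\lambda_i I)^{l_i}=G_{\lambda_i,l_i}$ by Lemma \ref{lemma:5}. In particular $\dim W_i\leq l_i$. The opposite inequality is the step I expect to require the most care, and it uses the \emph{minimality} of the minimal polynomial: if $(\varphi-\lambda_i I)^{l_i-1}$ vanished identically on $W_i$, then the polynomial obtained from $p$ by replacing the factor $(x-\lambda_i)^{l_i}$ with $(x-\lambda_i)^{l_i-1}$ would already annihilate $\varphi$ on the whole direct sum, contradicting minimality. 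Hence there exists $y\in W_i$ with $(\varphi-\lambda_i I)^{l_i-1}y\neq 0$, and then the standard argument shows that $y,(\varphi-\lambda_i I)y,\dots,(\varphi-\lambda_i I)^{l_i-1}y$ are linearly independent in $W_i$ (apply $(\varphi-\lambda_i I)^{l_i-1-k}$ to a hypothetical nontrivial relation whose lowest-degree term has index $k$). This yields $\dim W_i\geq l_i$, so $W_i=G_{\lambda_i,l_i}$ and the theorem follows.

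The genuine obstacle is not the decomposition of $V$ (already handled by Theorem \ref{prop:1}) but ensuring each Jordan block of $\varphi$ is \emph{as large as it can be} inside the ambient $G_{\lambda_i,l_i}$, which is precisely what the minimality of $p$ buys us; once this is in place, Lemma \ref{lemma:5} glues the Jordan decomposition of $\varphi$ to the polynomial-exponential description.
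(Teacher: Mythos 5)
Your proof is correct and shares its skeleton with the paper's: the easy direction is identical, and for the converse both arguments restrict $S$ to $V$, invoke Theorem \ref{prop:1} to decompose $V$ into the components $W_i=\ker(S|_V-\lambda_i I)^{l_i}$, and use Lemma \ref{lemma:5} to embed each $W_i$ into $G_{\lambda_i,l_i}$. Where you genuinely diverge is in the final step, proving $W_i=G_{\lambda_i,l_i}$, which you rightly flag as the delicate point. The paper argues concretely: it picks $v\in V\cap G_{\lambda_i,l_i}$ with nonzero top coefficient $\beta_{l_i-1}$ and applies $(S-\lambda_i I)^k$ for $k=0,\dots,l_i-1$, using the nonvanishing leading coefficients from \eqref{eq:23} to exhibit a triangular family in $V$ with leading terms $n^{l_i-1-k}\lambda_i^n$, which spans $G_{\lambda_i,l_i}$. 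You instead run a dimension count: $\dim W_i\le l_i$ from the inclusion, and $\dim W_i\ge l_i$ because minimality of the minimal polynomial yields $y\in W_i$ with $(\varphi-\lambda_i I)^{l_i-1}y\ne 0$, whose iterates under $\varphi-\lambda_i I$ are independent. The two arguments are morally equivalent (your independence proof is the coordinate-free version of the paper's leading-coefficient bookkeeping), but yours is tidier in one respect: it extracts the needed nondegeneracy directly from minimality of the minimal polynomial, whereas the paper separately stipulates that each $l_i$ is ``the smallest integer satisfying this equation'' and asserts $V\cap G_{\lambda_i,l_i-1}\subsetneq V\cap G_{\lambda_i,l_i}$ with little justification. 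The only thing to make explicit in your version is that $\dim G_{\lambda_i,l_i}=l_i$, i.e.\ that the sequences $(n^s\lambda_i^n)$, $s=0,\dots,l_i-1$, are linearly independent; this is immediate for $\lambda_i\neq 0$ and is needed just as much, though implicitly, by the paper's spanning argument.
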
  

\begin{proof}
  By Lemma \ref{lemma:5}, $G_{\lambda,m}$ is invariant under $S$ for any
  $\lambda, m$, thus any subspace of the form $\oplus_{i=1}^{s}
  G_{\lambda_i,l_i}$ is also $S$-invariant. Since the linear finite
  difference operators are polynomials in $S$, then those subspaces
  are invariant under those difference operators.

  On the other hand, if $V$ is a finite dimensional subspace which is
  invariant under every linear finite difference operator, in particular
  it is invariant under $S$. Therefore, by Proposition \ref{prop:1}
  there exist $\lambda_i, l_i$, $i=1,\dots,s$ such that
  \[
  V=\oplus_{i=1}^{s} \ker (S|_V-\lambda_i I_V)^{l_i}=\oplus_{i=1}^{s}
  (\ker (S-\lambda_i I)^{l_i}\cap V),\quad 
  \ker (S-\lambda_i I)^{l_i}\cap V\neq 0,
  \]
  and we can assume that  $l_i$ is the smallest integer satisfying
  this equation. 
  Since Lemma \ref{lemma:5} implies
   \[
   V=\oplus_{i=1}^{s} (V\cap G_{\lambda_i,l_i}).
   \]
   the proof will    be complete if we establish
   $G_{\lambda_i,l_i}\subseteq V$. Since 
   $$V\cap G_{\lambda_i,l_i-1}\subsetneq V\cap
   G_{\lambda_i,l_i}\neq 0,
   $$
   we  can choose
   \begin{equation}\label{eq:233}
     v=\sum_{s=0}^{l_i-1}   \beta_{s} (n^s\lambda^n)\in V\cap
   G_{\lambda_i,l_i}\setminus V\cap
   G_{\lambda_i,l_i-1},
   \end{equation}
   i.e.     $\beta_{l_i-1}\neq 0$.
   Equation \eqref{eq:23} and the $S$-invariance of $V$ yield for any
   $k\geq 0$:
   \begin{equation*}
    (S-\lambda I)^k\sum_{s=0}^{l_i-1}
   \beta_{s}
   (n^s\lambda^n)=\sum_{s=0}^{l_i-1}\beta_s\sum_{j=0}^{s-k}\alpha_{j}^{k,s} 
    (n^j\lambda^n)=
    \sum_{j=0}^{l_i-1-k}
    \gamma_{j,k}
    (n^j\lambda^n)\in V,
  \end{equation*}
  with $\gamma_{j,k}=\sum_{s=j+k}^{l_i-1}\beta_s\alpha_{j}^{k,s}
  $. Since, for any $0\leq k\leq
  l_{i}-1$, we get 
  $\gamma_{l_i-1-k,k}=\beta_{l_i-1}\alpha_{l_{i}-1-k}^{k,l_{i}-1}\neq
  0$ by \eqref{eq:23} and \eqref{eq:233}, we deduce that $G_{\lambda_i,l_i} \subseteq V$, as claimed.   
\end{proof}  

\section{Invariant subspaces of complex functions
  under linear  differential operators}
We consider now the differential operator $D$ on functions $y\colon
\mathbb K\to\complex$ ($\mathbb K=\reals$ or $\complex$)  given by
$Dy=y'$. Linear differential operators are polynomial evaluations of $D$.
We denote  by 
\[
H_{\lambda,m}=\{ e^{\lambda t}p(t) \colon p\in\Pi_{m-1}\}.
\]
\begin{lemma}\label{lemma:6}
  Given $\lambda\in \complex$ and $m>0$, 
  $\ker (D-\lambda I)^m=H_{\lambda,m}$.
\end{lemma}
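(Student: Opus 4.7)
The plan is to give a differential analog of Lemma \ref{lemma:5} that is in fact cleaner than its discrete counterpart, by exploiting the derivation property of $D$. Specifically, I will first establish the conjugation identity
\[
(D - \lambda I)^k(e^{\lambda t} u(t)) = e^{\lambda t} u^{(k)}(t)
\]
for every smooth function $u$ and every $k \geq 0$. The base case $k = 1$ is just the product rule:
\[
(D - \lambda I)(e^{\lambda t} u(t)) = \lambda e^{\lambda t} u(t) + e^{\lambda t} u'(t) - \lambda e^{\lambda t} u(t) = e^{\lambda t} u'(t),
\]
and a one-line induction on $k$, treating $e^{\lambda t} u'(t)$ as the new input, finishes the claim.

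From this identity the inclusion $H_{\lambda, m} \subseteq \ker (D - \lambda I)^m$ is immediate: if $p \in \Pi_{m-1}$ then $p^{(m)} \equiv 0$, so $(D - \lambda I)^m (e^{\lambda t} p(t)) = 0$. For the reverse inclusion, given $y \in \ker (D - \lambda I)^m$, I would factor out the exponential by setting $u(t) := e^{-\lambda t} y(t)$, so that $y(t) = e^{\lambda t} u(t)$. Applying the conjugation identity to this representation yields $0 = (D - \lambda I)^m y(t) = e^{\lambda t} u^{(m)}(t)$, and since $e^{\lambda t}$ is nowhere zero, $u^{(m)} \equiv 0$. An elementary argument (iterated antidifferentiation, or the mean value theorem applied $m$ times) then forces $u$ to be a polynomial of degree at most $m-1$, so $y \in H_{\lambda, m}$.

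The only real obstacle, if one can call it that, is the verification of the commutation identity, but it is far less delicate than its shift analog \eqref{eq:23}: because $D$ is a derivation, the $(D - \lambda I)$ term collapses cleanly against $e^{\lambda t}$, and there is no need for the auxiliary induction on $m$ used in Lemma \ref{lemma:5}, since the exponential factor can simply be divided out at the end.
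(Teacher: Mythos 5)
Your proof is correct, but it takes a genuinely different route from the paper's. The paper deliberately mirrors the discrete case: it establishes the expansion \eqref{eq:33}, $(D-\lambda I)^k(t^re^{\lambda t})=\sum_{j=0}^{r-k}\alpha_j^{k,r}(t^je^{\lambda t})$ with $\alpha_{r-k}^{k,r}\neq 0$, and then runs the same induction on $m$ as in Lemma \ref{lemma:5} to get the reverse inclusion. You instead exploit the derivation property of $D$ to prove the intertwining identity $(D-\lambda I)^k(e^{\lambda t}u)=e^{\lambda t}u^{(k)}$, which reduces everything to the elementary fact that $u^{(m)}\equiv 0$ forces $u\in\Pi_{m-1}$; this avoids the induction on $m$ entirely and is shorter and more transparent. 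Note that your identity actually subsumes and sharpens \eqref{eq:33}: taking $u(t)=t^r$ gives $(D-\lambda I)^k(t^re^{\lambda t})=\tfrac{r!}{(r-k)!}\,t^{r-k}e^{\lambda t}$, so the sum in \eqref{eq:33} collapses to its single leading term with $\alpha_{r-k}^{k,r}=r!/(r-k)!$ explicitly nonzero --- which is the fact the paper still needs later, in the proof of the final theorem (the analog of Theorem \ref{theorem:3}). Two small points worth tidying: in the reverse inclusion you only know that $u=e^{-\lambda t}y$ is $m$ times differentiable rather than smooth, which suffices for the identity with $k\leq m$ but should be said; and for $\mathbb{K}=\complex$ the step ``$u^{(m)}\equiv 0$ implies $u$ is a polynomial'' should be justified via holomorphy rather than the real mean value theorem.
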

\begin{proof}
  The proof is similar to that of Lemma \ref{lemma:5} and relies on
  the fact, easily established   by induction on $k$, that  there exist
  $\alpha_{j}^{k, r}\in\mathbb C$ such that
  \begin{equation}\label{eq:33}
    (D-\lambda I)^k(t^re^{\lambda t})=\sum_{j=0}^{r-k}\alpha_{j}^{k, r}
    (t^je^{\lambda t}),\quad \alpha_{r-k}^{k,r}\neq 0,
  \end{equation}
  for any $r\geq k$ and 
  $(D-\lambda I)^k(t^re^{\lambda t})=0$ for $k>r$. 
\end{proof}

The proof of the following theorem relies on Lemma \ref{lemma:6} and
is similar to that of Theorem 
\ref{theorem:3}. It  explains why inhomogeneous linear differential
equations can only be generally solved  when the right hand sides are sums of
exponentials times polynomials (see \cite{Strang,Gear,Hairer}).

\begin{theorem}
  Let $V$ be a finite dimensional subspace of the space of functions
  $y\colon\mathbb K\to \complex$. Then
  $V$ is invariant under every linear  differential operator if and
  only if there exists $\lambda_i, l_i$, $i=1,\dots,s$ such that
  $V=\oplus_{i=1}^{s}  H_{\lambda_i,l_i}$.
\end{theorem}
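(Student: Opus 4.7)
My plan is to mirror the argument of Theorem~\ref{theorem:3} essentially verbatim, substituting $D$ for the shift operator $S$, the spaces $H_{\lambda,m}$ for $G_{\lambda,m}$, and invoking Lemma~\ref{lemma:6} wherever Lemma~\ref{lemma:5} was used. The whole proof is really driven by the structural analogy between the sequence and function settings, so I will not reinvent any machinery.

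For the ``if'' direction, I would first note that Lemma~\ref{lemma:6} identifies $H_{\lambda,m}$ with $\ker(D-\lambda I)^m$, which is manifestly $D$-invariant. Since every linear differential operator is by hypothesis a polynomial in $D$, each summand $H_{\lambda_i,l_i}$ is preserved, and hence so is any direct sum $V=\oplus_{i=1}^{s}H_{\lambda_i,l_i}$.

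For the ``only if'' direction, $V$ is in particular $D$-invariant, so I would apply Theorem~\ref{prop:1} to the endomorphism $\varphi=D|_V$. This produces scalars $\lambda_1,\dots,\lambda_s$ and exponents $l_1,\dots,l_s$ with
\[
V=\oplus_{i=1}^{s}\ker(D|_V-\lambda_i I_V)^{l_i}=\oplus_{i=1}^{s}\bigl(\ker(D-\lambda_i I)^{l_i}\cap V\bigr),
\]
each summand nonzero, and I will take each $l_i$ minimal with this property. Lemma~\ref{lemma:6} then rewrites the decomposition as $V=\oplus_{i=1}^{s}(V\cap H_{\lambda_i,l_i})$, so all that remains is to upgrade the inclusion $V\cap H_{\lambda_i,l_i}\subseteq H_{\lambda_i,l_i}$ to an equality.

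This final inclusion $H_{\lambda_i,l_i}\subseteq V$ is the only non-routine step and the main obstacle, and I would handle it exactly as in Theorem~\ref{theorem:3}. By the minimality of $l_i$ I can pick $v=\sum_{s=0}^{l_i-1}\beta_s\, t^s e^{\lambda_i t}\in(V\cap H_{\lambda_i,l_i})\setminus(V\cap H_{\lambda_i,l_i-1})$, so that $\beta_{l_i-1}\neq 0$. Applying $(D-\lambda_i I)^k$ for $k=0,1,\dots,l_i-1$ to $v$ and using the expansion~\eqref{eq:33}, the $D$-invariance of $V$ yields vectors in $V$ of the form $\sum_{j=0}^{l_i-1-k}\gamma_{j,k}\, t^j e^{\lambda_i t}$ whose leading coefficients $\gamma_{l_i-1-k,k}=\beta_{l_i-1}\,\alpha_{l_i-1-k}^{k,l_i-1}$ are nonzero by~\eqref{eq:33}. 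This triangular system can be solved successively from $k=l_i-1$ downward to recover every $t^j e^{\lambda_i t}$ inside $V$, giving $H_{\lambda_i,l_i}\subseteq V$ and completing the proof.
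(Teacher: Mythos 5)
Your proposal is correct and follows exactly the route the paper intends: the paper itself only remarks that the proof "relies on Lemma \ref{lemma:6} and is similar to that of Theorem \ref{theorem:3}," and your translation of that argument (replacing $S$ by $D$, $G_{\lambda,m}$ by $H_{\lambda,m}$, and equation \eqref{eq:23} by \eqref{eq:33}) is precisely the omitted proof. If anything, you supply slightly more detail than the paper by spelling out the triangular elimination that recovers each $t^je^{\lambda_i t}$ from the vectors $(D-\lambda_i I)^k v$.
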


\end{document}